  \def\@listi{%
  \leftmargin = 0pt \rightmargin = 0pt
     \labelwidth\leftmargin \advance\labelwidth-\labelsep
     \partopsep  = 0pt \itemsep       = 0pt
     \itemindent = 0pt \listparindent = 0pt}
  \let\@listI\@listi
  \def\@listii{%
     \leftmargin = 0pt \rightmargin = 0pt
     \labelwidth\leftmargin \advance\labelwidth-\labelsep
     \topsep     = 0pt \partopsep     = 0pt \itemsep   = 0pt
     \itemindent = 0pt \listparindent = 0pt}
  \let\@listiii\@listii
  \let\@listiv\@listii
  \let\@listv\@listii
  \let\@listvi\@listii
\newtheorem{theorem}{Theorem}[section]
\newtheorem{corollary}{Corollary}[theorem]
\newtheorem{lemma}[theorem]{Lemma}
\newtheorem{conjectural theorem}{Conjectural theorem}[section]
\newtheorem*{definition}{Definition}
\newtheorem*{example}{Example}
\newcommand{\C}{{\mathbb{C}}}
\newcommand{\N}{{\mathbb{N}}}
\newcommand{\R}{{\mathbb{R}}}
\newcommand{\Z}{{\mathbb{Z}}}
\newcommand{\Fh}{{\mathcal{F}}}
\renewcommand{\det}{{\mathrm{det}}}
\newcommand{\tr}{{\mathrm{tr}}}
\newcommand{\Sp}{{\mathrm{Sp}}}
\newcommand{\Arg}{{\mathrm{Arg}}}
\begin{document}

    \title{On the leafwise cohomology and dynamical zeta functions for fiber bundles over the circle}

\author{Junhyeong Kim}

\address{Graduate School of mathematics \endgraf 
Kyushu University \endgraf
744 Motooka, Nishi-ku, \endgraf
Fukuoka 819-0395, Japan
}

\keywords{foliation, surface bundle, dynamical zeta function, trace formula
}

\thanks{ 
$^*$Research Fellow of the Japan Society for the Promotion of Science.
}

\email{j-kim@math.kyushu-u.ac.jp}

\subjclass[2010]{ 
Primary 53C12; Secondary 37C30.}

\begin{abstract}
In this paper, we give concrete descriptions of leafwise cohomology groups and show the regularized determinant expression of the dynamical zeta function for fiber bundles over $S^{1}$.
As applications, we show a functional equation and some formulas for special values of the dynamical zeta function.
\end{abstract}

    \maketitle

    \section*{Introduction}

In a series of papers (c.f. [D1]-[D5]), Deninger considered smooth manifolds with 1-codimensional foliation and transverse flow as geometric analogues of arithmetic schemes.
He studied the dynamical zeta functions by means of the (infinite dimensional) leafwise cohomology groups.
Among other things, he presented a form of the conjectural dynamical Lefschetz trace formula, and showed dynamical analogues of the explicit formula in analytic number theory and the Lichtenbaum conjecture on special values of the Hasse-Weil zeta function of arithmetic schemes.
Although Deninger founded a general framework for his theory, concrete examples still remain to be explored.
So it would be interesting to investigate his theory for some concrete examples.

In this paper, we consider a standard and simple example, namely, a fiber bundle over $S^{1}$.
It is equipped with the 1-codimensional foliation $\Fh$ by the projection to the circle $S^{1}$ and the transverse flow suspended by the monodromy.
Moreover it may be regarded as a geometric analogue of an algebraic variety over a finite field. 
Our first result is the concrete description of the leafwise cohomology groups $H_{\Fh}^{\bullet}(M)$ and the infinitesimal generator acting on it in terms of the singular/de Rham cohomology of the fiber.
Using this, we show the regularized determinant expression for the dynamical zeta function. We note that such a expression follows from a "conjectural" dynamical Lefschetz trace formula for general foliated manifolds.
As applications, we show a functional equation and some formulas for special values of the dynamical zeta function of a fiber bundle over $S^{1}$.

    \section{Preliminaries about foliated manifolds}\label{section1}
In this section, we recall fundamental settings; foliated dynamical systems on manifolds and leafwise cohomology groups.

\subsection{Foliation and transverse flow}

Let $X$ be a smooth,connected, closed and oriented manifold of $n$-dimension with a $d$-codimensional foliation $\Fh=\Fh_{X}$. 
The formal definition of a foliation is as follows: Let $(U_{i})_{i\in{I}}$ be charts covering $X$ together with maps $(\varphi_{i}:U_{i}\rightarrow\R^{n})_{i\in{I}}$.
Assume that the transition maps $\varphi_{ij}:=\varphi_{j} \circ \varphi_{i}^{-1}$ which are defined over $U_{i}\cap U_{j}$ take the form $\varphi_{ij}(x,y)=(\varphi_{ij}^{1}(x),\varphi_{ij}^{2}(x,y))$ where $x$ denotes the first $d$ coordinates and $y$ denotes the last $n-d$ coordinates.
By piecing together the stripes, where $x$ is constant, from chart to chart, we obtain immersed sub-manifolds whose first $d$ local coordinates on each $U_{i}$ are constant.
They form a partition $\Fh$ of the manifold $X$, and we call it a $d$-codimensional foliation.
Each sub-manifold is called a $\mathit{leaf}$ of the foliation.

Let $\phi:\R\times{X}\rightarrow{X}$ be a smooth $\R$-action such that maps leaves to leaves, i.e. for any two points $x$ and $y$ in a same leaf $\mathcal{L}$, there is a leaf $\mathcal{L}^{\prime}$ containing $\phi(t,x)$ and $\phi(t,y)$ for any $t\in\R$. 
We call the dynamical system the $\it{transverse}$ $\it{flow}$ which is compatible with the foliation $\Fh$. Denote $\phi(t,-):X\rightarrow{X}$ for $t\in\R$ by $\phi^{t}$. Note that each $\phi^{t}$ is a diffeomorphism of $X$.

\subsection{Leafwise cohomology and Infinitesimal generator}

Assuming that a manifold $X$ has a foliation $\Fh$, we have a sub-vector bundle $T\Fh$ of the tangent bundle $TX$ whose restriction to any leaf $\mathcal{L}$ is identified with the tangent bundle of the leaf, i.e. $T\Fh|_{\mathcal{L}}\cong T\mathcal{L}$. 
Differential $i$-forms along the leaves are defined as smooth sections to the sub-vector bundle $\wedge^{i} T^{*}\Fh$. 
Such a differential form is called a $\mathit{leafwise}$ $i$-$\mathit{form}$. 
Denote by $\mathcal{A}^{i}_{\Fh}(X)$ the space $\Gamma(X,\wedge^{i} T^{*}\Fh)$ of leafwise $i$-forms. 
If we consider a restriction map to a leaf $\mathcal{L}$, it induces the following map:
\begin{equation}
\label{eq:1.1}
\begin{matrix}
    \mathcal{A}^{i}_{\Fh}(X) & \rightarrow & \mathcal{A}^{i}(\mathcal{L}) \\
    \omega_{\Fh} &\mapsto &\omega_{\Fh}|_{\mathcal{L}}.
\end{matrix}
\end{equation}
It is easy to check that this map is surjective. 
The restriction of a leafwise form $\omega_{\Fh}$ becomes a differential form on the leaf. 

Let 
\begin{equation*}
d^{i}_{\Fh} : \mathcal{A}^{i}_{\Fh}(X) \rightarrow \mathcal{A}^{i+1}_{\Fh}(X).
\end{equation*}
be the exterior derivative which is restricted on $\mathcal{A}^{i}_{\Fh}(X)$. 
It is a differential operator which acts only along leaves direction. 
It satisfies the relation $d^{i+1}_{\Fh} \circ d^{i}_{\Fh} = 0$. 
Hence, the pairs $(\mathcal{A}^{i}_{\Fh}(X),d^{i}_{\Fh})$ form a complex. 
We call it the leafwise de Rham complex. We denote the kernel of $d_{\Fh}^{i}$ by $Z^{i}_{\Fh}(X)$ and the image of $d_{\Fh}^{i-1}$ by $B^{i}_{\Fh}(X)$. 
Note that leafwise $i$-forms in $Z^{i}_{\Fh}(X)$ (resp. $B^{i}_{\Fh}(X)$) are called leafwise closed $i$-forms (resp. leafwise exact $i$-forms).
Then, we have the following definition:
\begin{definition}
We define the $i$-th $\mathit{leafwise}$ $\mathit{cohomology}$ group of $\Fh$ by the $i$-th homology group of the leafwise de Rham complex.
\begin{equation*}
    H^{i}_{\Fh}(X) := Z^{i}_{\Fh}(X) / B^{i}_{\Fh}(X).
\end{equation*}
\end{definition}
Since $X$ is closed, the leafwise cohomology group is trivial for $i>n-d$.
For the transverse flow, each diffeomorphism $\phi^{t}$ can be induced on any $\mathcal{A}^{i}_{\Fh}(X)$ as the pullback. 
Then we have the following operator:
\begin{definition}
The infinitesimal generator on  $\mathcal{A}^{i}_{\Fh}(X)$ is defined by
\begin{equation*}
    \Theta := \underset{t\rightarrow0}{\mathrm{lim}} \frac{\phi^{t*}- \mathrm{id}}{t}.
\end{equation*}
\end{definition}
The operator $\Theta$ is shown to be independent of the choice of the time parameter $t$. 

    \section{Leafwise cohomology of fiber bundles over \texorpdfstring{$S^{1}$}{}}\label{section2}

In this section, we give a concrete description of the leafwise cohomology groups for fiber bundles over $S^{1}$.

\subsection{Fiber bundles over \texorpdfstring{$S^{1}$}{}}

Let $S$ be a orientable closed manifold of $d$ dimension and $\varphi$ be an orientation preserving diffeomorphism of $S$.
Assume that the number of periodic orbits of $\varphi$ is countably many.

\begin{example}
We call $\varphi$ an Anosov diffeomorphism if a tangent bundle splits into two sub-bundles which are invariant with respect to the differential of  $\varphi$, one of which is contracting and the other is expanding under $\varphi$ for some Riemannian metric.
It is known that if $\varphi$ is an Anosov diffeomorphism, it has countably many periodic points.

\end{example}

We define the mapping torus of $\varphi$ as follows:
\begin{equation} \label{eq:3.1}
    M:=S \times \R / (\varphi(x),t) \sim (x,t+\log{r})\quad\mathrm{for}\, r>1.
\end{equation}
The manifold $M$ is the fiber bundle over $S^{1}$ of $(d+1)$ dimension with the projection $\pi : M \rightarrow S^{1};[x,t]\mapsto{t\mbox{ mod }\Z}$.
We give additional structures on $M$; a foliation of 1-codimension and the transverse flow.

A foliation structure is given by the partition $\Fh=\{ \pi^{-1}(t) | t\in{S^{1}} \}$. Every fiber over the circle is leaves of the foliation.
Denote by $\mathcal{L}_{t}$ the leaf $\pi^{-1}(t)$ for $t\in{S^{1}}$.
The transverse flow $\phi$, which maps leaves to leaves, is defined by a smooth $\R$-action on $M$
\begin{equation*}
    \phi^{t}[x,s] = [x, s+ t]\quad\mbox{for}\,t\in\R.
\end{equation*}
The flow can be seen as the $\R$-action being suspended from the $\Z$-action on $S$ via the powers of $\varphi$.
Therefore we have a periodic obit $\mathfrak{o}$ of the $\Z$-action on $S$ for any closed orbit $\gamma$ of the $\R$-action on $M$.
The period of a closed orbit $\gamma$ is $\log{r}$ times the period $|\mathfrak{o}|$ of corresponding periodic orbit and the norm of $\gamma$ is defined by $\log N(\gamma):=|\mathfrak{o}|\log{r}$.

\subsection{The leafwise cohomology groups}

We describe leafwise differential forms more explicitly for $M$.
Choosing a fixed $t$ in $S^{1}$, we give a restriction map to the leaf  $\mathcal{L}_{t}=\pi^{-1}(t)$:
\begin{equation*}
\begin{matrix}
P_{i,t} : &\mathcal{A}^{i}_{\Fh}(M) &\rightarrow &\mathcal{A}^{i}(\mathcal{L}_{t})\\ &w_{\Fh} &\mapsto &w_{\Fh}|_{\mathcal{L}_{t}}.
\end{matrix}
\end{equation*}
Since $M$ is suspended by $\varphi$, leafwise differential forms have a natural boundary condition about the time parameter $t$; $\omega_{\Fh}|_{\mathcal{L}_{t+\log{r}}} = \varphi^{*}(\omega_{\Fh}|_{\mathcal{L}_{t}})$. 
It is easy to check that the $\R$-action $\phi^{\log{r} *}$ on $\mathcal{A}^{i}_{\Fh}(M)$ coincides with $\varphi^{*}$.

Leafwise differential forms can be described as paths in $\mathcal{A}^{i}(S)$ as follows: Denote by $\tilde{M}$ a $\Z$-covering of $M$ which is homeomorphic to $S\times\R$. 
It has the induced foliation $\tilde{\Fh}$ whose leaves are the slices $S\times \{ * \}$.
First, we have an injective map
\begin{equation}\label{eq:2.2}
    \begin{split}
        C^{\infty}(\R,\mathcal{A}^{i}(S)) &\hookrightarrow {\Gamma(S\times\R;\wedge^{i}T^{*}S)}=\mathcal{A}_{\tilde{\Fh}}^{i}(\tilde{M})\\
        [t\mapsto{s_{t}}] &\mapsto [(x,t)\mapsto{s_{t}(x)}].
    \end{split}
\end{equation}
Here we set a topology on $C^{\infty}(\R,\mathcal{A}^{i}(S))$ induced by $C^{\infty}$-topology on $\mathcal{A}_{\tilde{\Fh}}^{i}(\tilde{M})$.
Note that we also have an injective map
\begin{equation}\label{eq:2.3}
    \begin{split}
        \mathcal{A}_{\tilde{\Fh}}^{i}(\tilde{M})&\hookrightarrow{C}^{\infty}(\R,\mathcal{A}^{i}(S))\\
        \omega_{\tilde{\Fh}}&\mapsto{[t\mapsto{\omega_{\tilde{\Fh}}|_{S\times\left\{t\right\} }}]}.
    \end{split}
\end{equation}
By (\ref{eq:2.2}) and (\ref{eq:2.3}), we have the homeomorphism between $C^{\infty}(\R,\mathcal{A}^{i}(S))$ and $\mathcal{A}_{\tilde{\Fh}}^{i}(\tilde{M})$.

Leafwise differential forms on $M$ are injectively lifted into $\mathcal{A}_{\tilde{\Fh}}^{i}(\tilde{M})$ by the covering map.
Then we have the following map via the homeomorphism
\begin{equation*}
\begin{split}
    P : \mathcal{A}^{i}_{\Fh}(M) &\rightarrow {C}^{\infty}(\R,\mathcal{A}^{i}(S))\cong\mathcal{A}_{\tilde{\Fh}}^{i}(\tilde{M}) \\
    \omega_{\Fh}&\mapsto(t\mapsto\omega_{\Fh}|_{\mathcal{L}_{t}} ).
\end{split}
\end{equation*}
The image of $P$ is a subspace of ${C}^{\infty}(\R,\mathcal{A}^{i}(S))$ which satisfies the boundary condition.
We denote by ${C}^{\infty}_{\varphi}(\R,\mathcal{A}^{i}(S))$ the subspace.
We have the following homeomorphism:
\begin{equation*}
    P:\mathcal{A}^{i}_{\Fh}(M) \overset{\sim}{\longrightarrow} {C}^{\infty}_{\varphi}(\R,\mathcal{A}^{i}(S)).
\end{equation*}
We describe the leafwise cohomology groups as path spaces in the following theorem.
\begin{theorem}
\label{thm:2.1}
We have a homeomorphism:
\begin{equation} \label{eq_Psi}
    \begin{split}
            \Psi:{H}^{i}_{\Fh}(M) &\overset{\sim}{\longrightarrow} C^{\infty}_{\varphi}(\R,H^{i}(S))\\
            [\omega_{\Fh}]&\mapsto[t\mapsto[\omega_{\Fh}]|_{\mathcal{L}_{t}}].
    \end{split}
\end{equation}
\end{theorem}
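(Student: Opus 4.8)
The plan is to transport the statement through the homeomorphism $P\colon \mathcal{A}^{i}_{\Fh}(M)\overset{\sim}{\to} C^{\infty}_{\varphi}(\R,\mathcal{A}^{i}(S))$ already constructed, and then to compute cohomology on the path side. Since $d_{\Fh}$ differentiates only along the leaves, and each leaf $\mathcal{L}_{t}$ is a copy of $S$ on which $d_{\Fh}$ restricts to the de Rham differential, $P$ intertwines $d_{\Fh}$ with the pointwise differential $(s_{t})_{t}\mapsto (d s_{t})_{t}$. Hence $P$ is an isomorphism of complexes and identifies $H^{i}_{\Fh}(M)$ with the $i$-th cohomology of $\big(C^{\infty}_{\varphi}(\R,\mathcal{A}^{\bullet}(S)),d\big)$. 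Under this identification $\Psi$ becomes the map sending the class of a twisted-periodic path $(s_{t})_{t}$ of leafwise closed forms to $t\mapsto[s_{t}]\in H^{i}(S)$. Writing $a:=\log r$, the boundary condition $s_{t+a}=\varphi^{*}s_{t}$ gives $[s_{t+a}]=\varphi^{*}[s_{t}]$, so the image lies in $C^{\infty}_{\varphi}(\R,H^{i}(S))$; smoothness in $t$ of $t\mapsto[s_{t}]$ will follow from the harmonic representative introduced next.

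Fix a Riemannian metric on $S$ and let $H$ be the harmonic projection and $G$ the Green operator of the Hodge Laplacian, so that $d\,(d^{*}G)s=s-Hs$ for every form $s$. The operators $H$ and $K:=d^{*}G$ are continuous and linear on $\mathcal{A}^{\bullet}(S)$; in particular they commute with $\tfrac{d}{dt}$ and send smooth $t$-families to smooth $t$-families, which is what makes every construction below vary smoothly in $t$ (and shows that $t\mapsto H s_{t}$ represents $t\mapsto[s_{t}]$ smoothly). The one feature we must work around is that, since $\varphi$ need not be an isometry — indeed an Anosov $\varphi$ admits no invariant metric — neither $H$ nor $K$ commutes with $\varphi^{*}$; reconciling the Hodge-theoretic primitives with the twisted periodicity is the heart of the argument, and I expect it to be the main obstacle. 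I will overcome it by matching $\infty$-jets at the endpoints of one fundamental interval $[0,a]$.

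For injectivity, suppose $(s_{t})_{t}$ is a twisted-periodic path of leafwise exact forms. On $[0,a]$ set $\eta^{0}_{t}:=K s_{t}$, so that $d\eta^{0}_{t}=s_{t}$ and $\eta^{0}$ is smooth. Differentiating $s_{t+a}=\varphi^{*}s_{t}$ gives $s^{(j)}_{a}=\varphi^{*}s^{(j)}_{0}$ for all $j$, whence each $\epsilon_{j}:=K\varphi^{*}s^{(j)}_{0}-\varphi^{*}K s^{(j)}_{0}$ is a \emph{closed} $(i-1)$-form (apply $d$, use $dK=\id-H$, $\varphi^{*}d=d\varphi^{*}$, and $H s^{(j)}_{0}=H\varphi^{*}s^{(j)}_{0}=0$ since $s_{t}$ is exact). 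By Borel's lemma in the Fréchet space $Z^{i-1}(S)$ of closed forms, choose a smooth family $\zeta_{t}\in Z^{i-1}(S)$ on $[0,a]$ that is flat at $0$ and satisfies $\zeta^{(j)}_{a}=-\epsilon_{j}$. Then $\eta_{t}:=\eta^{0}_{t}+\zeta_{t}$ still obeys $d\eta_{t}=s_{t}$ and has $\eta^{(j)}_{a}=\varphi^{*}\eta^{(j)}_{0}$ for all $j$, so the twisted-periodic extension $\eta_{t+ka}:=(\varphi^{*})^{k}\eta_{t}$ is smooth across the gluing points and lies in $C^{\infty}_{\varphi}(\R,\mathcal{A}^{i-1}(S))$ with $d\eta=s$. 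Thus $(s_{t})_{t}$ is a leafwise coboundary.

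Surjectivity uses the same device: given $c\in C^{\infty}_{\varphi}(\R,H^{i}(S))$, let $s^{0}_{t}$ be the harmonic representative of $c_{t}$ on $[0,a]$ (smooth and linear in $c$); then $s^{0,(j)}_{a}-\varphi^{*}s^{0,(j)}_{0}$ is exact, equal to $d\gamma_{j}$ with $\gamma_{j}=K\big(s^{0,(j)}_{a}-\varphi^{*}s^{0,(j)}_{0}\big)$, and Borel's lemma furnishes an $(i-1)$-form family $\beta_{t}$, flat at $0$ with $\beta^{(j)}_{a}=-\gamma_{j}$, so that $s_{t}:=s^{0}_{t}+d\beta_{t}$ is closed, satisfies $[s_{t}]=c_{t}$ and $s^{(j)}_{a}=\varphi^{*}s^{(j)}_{0}$, and hence extends to an element of $C^{\infty}_{\varphi}(\R,Z^{i}(S))$; this makes $\Psi$ a bijection. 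Finally, $\Psi$ is continuous because it is induced by the continuous linear map $(s_{t})_{t}\mapsto (H s_{t})_{t}$, and $\Psi^{-1}$ is continuous because the construction just given is continuous and linear in $c$ once Borel's lemma is replaced by its linear (Seeley-type) version. Hence $\Psi$ is a homeomorphism; the only genuine difficulty, the clash between the metric-dependent operators $H,K$ and the monodromy $\varphi^{*}$, is exactly what the jet-matching over a single period removes.
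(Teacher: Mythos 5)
Your overall route coincides with the paper's: transport everything through $P$ to the twisted path complex $\bigl(C^{\infty}_{\varphi}(\R,\mathcal{A}^{\bullet}(S)),D^{\bullet}_{r}\bigr)$ and identify its cohomology with $C^{\infty}_{\varphi}(\R,H^{i}(S))$. The difference is that the paper simply \emph{asserts} this identification (it writes down the isomorphism $\mathrm{Ker}(D^{i}_{r})/\mathrm{Im}(D^{i-1}_{r})\cong C^{\infty}_{\varphi}(\R,H^{i}(S))$ with no argument) and then concludes via the commutative diagram, whereas you actually prove it. Your diagnosis of the difficulty is exactly right: the Hodge operators $H$ and $K=d^{*}G$ solve exactness fiberwise but do not commute with $\varphi^{*}$ (an Anosov $\varphi$ preserves no metric), so the naive primitives $Ks_{t}$ and harmonic representatives violate the twisted periodicity, and matching $\infty$-jets over one fundamental interval repairs this. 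The individual verifications check out: $\epsilon_{j}$ is closed by $dK=\id-H$ together with $Hs^{(j)}_{0}=H\varphi^{*}s^{(j)}_{0}=0$ (valid because $B^{i}(S)$ is a closed subspace by Hodge theory, so derivatives of a path of exact forms stay exact), and the jet computations for $\eta_{t}=\eta^{0}_{t}+\zeta_{t}$ and $s_{t}=s^{0}_{t}+d\beta_{t}$ give smooth twisted-periodic extensions. So your write-up supplies the substantive content that the paper's one-line claim leaves unproved.

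The one genuine flaw is the last step, continuity of $\Psi^{-1}$: there is no ``linear (Seeley-type) version'' of Borel's lemma at a point. A continuous linear right inverse of the Borel map $f\mapsto(f^{(j)}(a))_{j\geq0}$ would be a continuous linear \emph{injection} of $\R^{\N}$ into $C^{\infty}([0,a],\mathcal{A}^{i-1}(S))$, which carries a continuous norm (the sup-norm), while $\R^{\N}$ admits none --- this is the classical non-splitting of the Borel map (Mityagin). Fortunately you never need arbitrary jets: in your construction the prescribed jets are the jets at a point of explicitly given smooth families, so a cutoff replaces Borel and is continuous and linear in the data. Concretely, in the surjectivity step set $u_{t}:=K\bigl(\iota(c_{t+a})-\varphi^{*}\iota(c_{t})\bigr)$, where $\iota$ is the harmonic-representative embedding, so that $u^{(j)}_{0}=\gamma_{j}$, and take $\beta_{t}:=-\chi(t)\,u_{t-a}$ with $\chi$ smooth, $\equiv0$ near $0$ and $\equiv1$ near $a$; in the injectivity step take $\zeta_{t}:=-\chi(t)\,(K\varphi^{*}-\varphi^{*}K)s_{t-a}$, each value being closed by your own computation. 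Alternatively, your injectivity argument identifies the coboundary space with the closed subspace $\{s: Hs_{t}=0\ \text{for all } t\}$ of pointwise-exact twisted paths, so $H^{i}_{\Fh}(M)$ is Fr\'echet, and the open mapping theorem upgrades the continuous bijection $\Psi$ to a homeomorphism with no linear Borel at all. With either repair your proof is complete, and strictly stronger than the paper's.
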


\begin{proof}
Let
\begin{equation*}
\begin{matrix}
    {D}^{i}:&{C}^{\infty}(\R,\mathcal{A}^{i}(S)) &\rightarrow &{C}^{\infty}(\R,\mathcal{A}^{i+1}(S)) \\
     &(c:t \mapsto c(t)) &\mapsto & ({D}^{i}c : t \mapsto d^{i}_{s}c(t))
\end{matrix}
\end{equation*}
be an operator, where $d^{i}_{s}$ is the exterior derivative on $S$. 
Since it satisfies ${D}^{i} \circ {D}^{i+1}=0$, the pairs $({C}^{\infty}(\R,\mathcal{A}^{i}(S)), {D}^{i})$ form a cochain complex.

For a path $c$ in ${C}^{\infty}_{\varphi}(\R,\mathcal{A}^{i}(S))$, the operator ${D}^{i}$ is compatible with the boundary condition as follows:
\begin{equation*}
\begin{split}
    {D}^{i}c(t+\log{r}) &= d^{i}_{s}c(t+\log{r})\\
    &=d^{i}_{s}(\varphi^{*}(c(t))) \\
    &=\varphi^{*}(d^{i}_{s}c(t)) \\
    &=\varphi^{*}({D}^{i}c(t)).
\end{split}
\end{equation*}
Denote by ${D}^{i}_{r}$ the restricted operator.
It is easy to check that the restricted operator satisfies ${D}^{i}_{r} \circ {D}^{i+1}_{r}=0$. 
Hence the pairs $({C}^{\infty}_{\varphi}(\R,\mathcal{A}^{i}(S)),{D}^{i}_{r})$ form a cochain complex too.
Moreover, the homology groups of the cochain complex have the following isomorphisms
\begin{equation*}
    \begin{split}
        \mbox{Ker}(D^{i}_{r})/\mbox{Im}(D^{i-1}_{r})&\overset{\cong}{\rightarrow}{C}^{\infty}_{\varphi}(\R,{H}^{i}(S))\\
        {c \mbox{ mod }\mbox{Im}(D^{i-1}_{r})}&\mapsto[{t\mapsto{c(t)\mbox{ mod }\mbox{Im}(d^{i-1}_{s})}}].
    \end{split}
\end{equation*}

Finally, since the following diagram is commutative
\begin{center}
    \begin{tikzcd}
    \cdots\ar[r]&\mathcal{A}^{i}_{\Fh}(M) \ar[r, "d_{\Fh}^{i}"] \ar[d, "P"] & \mathcal{A}^{i+1}_{\Fh}(M) \ar[d, "P"]\ar[r]& \cdots\\
    \cdots\ar[r]&{C}^{\infty}_{\varphi}(\R,\mathcal{A}^{i}(S)) \ar[r, "D^{i}_{r}"] & {C}^{\infty}_{\varphi}(\R,\mathcal{A}^{i+1}(S))\ar[r] &\cdots,
\end{tikzcd}
\end{center}
the homeomorphism $p$ induces the quasi-isomorphism $\Psi$ between cochain complexes.
\end{proof}

Let $\frac{d}{dt}$ be a differential operator on $ C^{\infty}_{\varphi}(\R,H^{i}(S))$ given by
\begin{equation*}
(\frac{d}{dt}c)(t):=\lim_{h\rightarrow0}\frac{c(t+h)-c(t)}{h}\quad\mbox{for }t\in\R.
\end{equation*}
We have the following corollary for the operator:
\begin{theorem}
The infinitesimal generator $\Theta$ on  $\bar{H}^{i}_{\Fh}(M)$ corresponds to the differential operator $\frac{d}{dt}$ on $C^{\infty}_{\varphi}(\R,H^{i}(S))$:
\begin{equation*}
({H}^{i}_{\Fh}(M),\Theta) \cong (C^{\infty}_{\varphi}(\R,H^{i}(S)),\frac{d}{dt}).
\end{equation*}
\end{theorem}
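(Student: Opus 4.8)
The plan is to trace the transverse flow through the homeomorphism $P:\mathcal{A}^{i}_{\Fh}(M)\xrightarrow{\sim}C^{\infty}_{\varphi}(\R,\mathcal{A}^{i}(S))$ and then to pass to cohomology via Theorem \ref{thm:2.1}. The heart of the argument is to identify the pullback $\phi^{s*}$ with a time-shift of paths. Working on the $\Z$-cover $\tilde{M}\cong S\times\R$, the flow lifts to $\tilde{\phi}^{s}(x,t)=(x,t+s)$, which carries the leaf $S\times\{t\}$ onto $S\times\{t+s\}$ by the identity on $S$. Hence, if a leafwise form $\omega_{\Fh}$ corresponds under $P$ to the path $c(t)=\omega_{\Fh}|_{\mathcal{L}_{t}}$, then $(\phi^{s*}\omega_{\Fh})|_{\mathcal{L}_{t}}=(\phi^{s}|_{\mathcal{L}_{t}})^{*}(\omega_{\Fh}|_{\mathcal{L}_{t+s}})=c(t+s)$, so that $P$ turns $\phi^{s*}$ into the shift operator $T_{s}:c\mapsto c(\,\cdot\,+s)$. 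One checks that $T_{s}$ respects the boundary condition, since $(T_{s}c)(t+\log r)=c(t+s+\log r)=\varphi^{*}(c(t+s))=\varphi^{*}((T_{s}c)(t))$, whence $T_{s}$ preserves $C^{\infty}_{\varphi}(\R,\mathcal{A}^{i}(S))$.

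Granting this, the infinitesimal generator is obtained by differentiating in $s$. Because $P$ is a linear homeomorphism intertwining $\phi^{s*}$ with $T_{s}$ for every $s$, it intertwines the limits that define the corresponding generators:
\begin{equation*}
P\circ\Theta=P\circ\lim_{s\rightarrow0}\frac{\phi^{s*}-\id}{s}=\lim_{s\rightarrow0}\frac{T_{s}-\id}{s}\circ P=\frac{d}{dt}\circ P,
\end{equation*}
so $\Theta$ corresponds to $\frac{d}{dt}$ already at the level of leafwise forms, the limit being taken in the $C^{\infty}$-topology.

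I would then descend to cohomology. Since pullback commutes with the exterior derivative, $\phi^{s*}$, and hence $\Theta$, preserves $Z^{i}_{\Fh}(M)$ and $B^{i}_{\Fh}(M)$ and therefore acts on $H^{i}_{\Fh}(M)$. On the path side, $T_{s}$ commutes with $D^{i}_{r}$, and $\frac{d}{dt}$ preserves $\mathrm{Ker}(D^{i}_{r})$ and $\mathrm{Im}(D^{i-1}_{r})$; as $H^{i}(S)$ is finite dimensional, $\frac{d}{dt}$ descends directly to an operator on the $H^{i}(S)$-valued paths $C^{\infty}_{\varphi}(\R,H^{i}(S))$. The quasi-isomorphism $\Psi$ of Theorem \ref{thm:2.1} is induced by $P$, which intertwines $\Theta$ and $\frac{d}{dt}$ at the level of forms, so passing to cohomology classes yields $\Psi\circ\Theta=\frac{d}{dt}\circ\Psi$, i.e.\ the asserted isomorphism of pairs $(H^{i}_{\Fh}(M),\Theta)\cong(C^{\infty}_{\varphi}(\R,H^{i}(S)),\frac{d}{dt})$.

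The main obstacle I anticipate lies in the first step: computing $\phi^{s*}$ correctly under the covering-space identification, in particular keeping track of the direction of the shift so that it matches the sign convention in the definition of $\frac{d}{dt}$, and justifying that the difference quotient converges in the $C^{\infty}$-topology (so that $\Theta$ is genuinely defined and the limit may be exchanged with the continuous map $P$). Once the equality $P\circ\phi^{s*}=T_{s}\circ P$ is established for each $s$, the passage to generators and the descent to cohomology are formal, relying only on the commutativity already recorded in Theorem \ref{thm:2.1} and on the fact that the flow acts as a morphism of the leafwise de Rham complex.
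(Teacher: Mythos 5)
Your proposal is correct and takes essentially the same route as the paper: the key fact in both is that the identification turns the flow pullback into the time shift, $\Psi(\phi^{h*}[\omega_{\Fh}])(t)=\Psi([\omega_{\Fh}])(t+h)$ --- which you establish at the level of forms as $P\circ\phi^{s*}=T_{s}\circ P$ via the covering-space computation before descending --- after which both arguments differentiate in the flow parameter and exchange the limit with the continuous linear identification. The paper simply works directly with $\Psi$ on cohomology classes and leaves the shift identity and the convergence of the difference quotient implicit, so your version is a somewhat more detailed rendering of the same proof.
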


\begin{proof}
For $[\omega_{\Fh}]\in{H}^{i}_{\Fh}(M)$, we have the following correspondence via the homeomorphism
    \begin{equation*}
        \Psi:\phi^{h*}[\omega_{\Fh}]\mapsto[t\mapsto[\omega_{\Fh}]|_{\mathcal{L}_{t+h}}].
    \end{equation*}
    Therefore we have $\Psi(\phi^{h*}[\omega_{\Fh}])(t)=\Psi([\omega_{\Fh}])(t+h)$.
    
    By using the property, we can show that the infinitesimal generator and the differential operator are commutative with $\Psi$
    \begin{equation*}
        \begin{split}
                \Psi(\Theta[\omega_{\Fh}])(t) &= \Psi\left(\lim_{h\rightarrow0} \frac{\phi^{h*}([\omega_{\Fh}]) - [\omega_{\Fh}]}{h}\right)(t)\\
                &= \lim_{h\rightarrow0}  \frac{ \Psi(\phi^{h*}([\omega_{\Fh}]))(t) - \Psi([\omega_{\Fh}])(t)}{h}\\
                &=\lim_{h\rightarrow0}  \frac{ \Psi([\omega_{\Fh}])(t+h) - \Psi([\omega_{\Fh}])(t)}{h}\\
                &=\left(\frac{d}{dt}\Psi([\omega_{\Fh}])\right)(t).
        \end{split}
    \end{equation*}
\end{proof}

    \section{Dynamical zeta functions of fiber bundles over \texorpdfstring{$S^{1}$}{}}\label{section3}

\subsection{Cohomological expression}
We define the dynamical zeta function of $M$ by
\begin{equation*}
    \zeta(M,\Fh_{M},\phi^{t};s):=\prod_{\gamma}(1-N(\gamma)^{-s})^{-1}
\end{equation*}
where $\gamma$ runs over periodic orbits of the $\R$-action $\phi^{t}$.
Denote simply by $\zeta(M;s)$ the dynamical zeta function of $(M,\Fh_{M},\phi^{t})$.
C. Deninger suggested that the dynamical zeta function can be described in terms of leafwise cohomology groups and infinitesimal generator and is an analogue of the Hasse-Weil zeta function.
We show that $\zeta(M;s)$ has the leafwise cohomological expression for $(M,\Fh_{M},\phi^{t})$ as C. Deninger conjectured.

\begin{theorem}
The dynamical zeta function $\zeta(M;s)$ is described in terms of leafwise cohomology groups and infinitesimal generator 
    \begin{equation*}
        \zeta(M;s)=\prod_{i}^{d}\det_{\infty}(s\cdot\mbox{id}-\Theta|H^{i}_{\Fh}(M,\C))^{(-1)^{i+1}}
    \end{equation*}
where $\det_{\infty}$ is the regularized determinant and $H^{i}_{\Fh}(M,\C)$ is the complexification of $H^{i}_{\Fh}(M)$.
\end{theorem}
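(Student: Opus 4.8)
The plan is to carry the computation over to the model $(C^\infty_\varphi(\R,H^i(S,\C)),\tfrac{d}{dt})$ furnished by Theorem~\ref{thm:2.1} and the infinitesimal-generator theorem, to evaluate the regularized determinant there by diagonalizing the monodromy, and then to match the alternating product of characteristic polynomials with the Euler product over closed orbits via the Lefschetz fixed-point formula. First I would pin down the spectrum of $\Theta$. Writing $L=\log r$ and letting $\alpha_{i,1},\dots,\alpha_{i,b_i}$ be the eigenvalues of $\varphi^{*}$ on $H^{i}(S,\C)$ (passing to generalized eigenspaces if $\varphi^{*}$ is not semisimple, which does not affect determinants), an eigenvector $v$ with $\varphi^{*}v=\alpha v$ yields eigenfunctions $t\mapsto e^{\lambda t}v$ of $\tfrac{d}{dt}$, where the boundary condition $c(t+L)=\varphi^{*}c(t)$ forces $e^{\lambda L}=\alpha$, i.e.
\begin{equation*}
\lambda=\lambda_{j,k}=\frac{\log\alpha_{i,j}+2\pi\sqrt{-1}\,k}{L},\qquad k\in\Z .
\end{equation*}
Hence the spectrum of $\Theta$ on $H^i_\Fh(M,\C)$ is the doubly-indexed set $\{\lambda_{j,k}\}_{j,k}$.

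Next I would evaluate the regularized determinant factor by factor in $k$. The contribution of the eigenvalue $\alpha_{i,j}$ is the $\zeta$-regularized product $\prod_{k\in\Z}(s-\lambda_{j,k})$; setting $\mu=s-\tfrac{\log\alpha_{i,j}}{L}$ this is the regularized determinant of $\mu-\tfrac{d}{dt}$ on $L$-periodic functions, whose standard value is $1-e^{-\mu L}$. Since $e^{-\mu L}=e^{-sL}\alpha_{i,j}=\alpha_{i,j}r^{-s}$, the $k$-product collapses to $1-\alpha_{i,j}r^{-s}$, and multiplying over $j$ gives
\begin{equation*}
\det_{\infty}\bigl(s\cdot\id-\Theta\mid H^{i}_{\Fh}(M,\C)\bigr)=\prod_{j}\bigl(1-\alpha_{i,j}r^{-s}\bigr)=\det\bigl(1-r^{-s}\varphi^{*}\mid H^{i}(S,\C)\bigr).
\end{equation*}

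Finally I would convert the alternating product of these characteristic polynomials into the orbit Euler product. Put $z=r^{-s}$, so that $N(\gamma)^{-s}=z^{|\mathfrak{o}|}$ for the periodic orbit $\mathfrak{o}$ corresponding to $\gamma$. Expanding $\log\det(1-z\varphi^{*}\mid H^{i}(S,\C))=-\sum_{m\ge1}\tfrac{z^{m}}{m}\tr(\varphi^{m*}\mid H^{i}(S,\C))$ and collecting the signs yields
\begin{equation*}
\prod_{i=0}^{d}\det\bigl(1-z\varphi^{*}\mid H^{i}(S,\C)\bigr)^{(-1)^{i+1}}=\exp\Bigl(\sum_{m\ge1}\frac{z^{m}}{m}\,L(\varphi^{m})\Bigr),
\end{equation*}
where $L(\varphi^{m})=\sum_{i}(-1)^{i}\tr(\varphi^{m*}\mid H^{i}(S,\C))$ is the Lefschetz number. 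Invoking the Lefschetz fixed-point formula to rewrite $L(\varphi^{m})$ as a sum over $\mathrm{Fix}(\varphi^{m})$ and regrouping the fixed points according to the minimal periods of the orbits (via $\#\mathrm{Fix}(\varphi^{m})=\sum_{|\mathfrak{o}|\mid m}|\mathfrak{o}|$) turns the right-hand side into $\prod_{\mathfrak{o}}(1-z^{|\mathfrak{o}|})^{-1}=\zeta(M;s)$, which together with the previous display proves the theorem.

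The hard part will be the two normalizations hidden in the phrases ``standard value'' and ``Lefschetz fixed-point formula.'' For the former one must justify the $\zeta$-regularization of the doubly-indexed spectrum, fix the branch of $\log\alpha_{i,j}$ so that the factor $1-\alpha_{i,j}r^{-s}$ appears with the correct sign and exponential prefactor, and check that $\det_{\infty}$ is multiplicative across the $\varphi^{*}$-eigenspaces with no multiplicative anomaly. For the latter the crux is matching the \emph{signed} fixed-point count carried by $L(\varphi^{m})$ with the \emph{unsigned} orbit product defining $\zeta(M;s)$: this forces one to control the indices $\mathrm{sgn}\,\det(1-d\varphi^{|\mathfrak{o}|})$ of the periodic points, which is exactly where the orientation-preserving hypothesis on $\varphi$ and the hyperbolic (e.g.\ Anosov) structure of the return maps must be used. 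I expect this sign bookkeeping, rather than either formal manipulation, to be the real obstacle.
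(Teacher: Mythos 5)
Your proposal is correct and follows essentially the same route as the paper: your spectrum computation is the paper's Theorem~\ref{thm:3.1}, your ``standard value'' $1-e^{-\mu L}$ for the regularized product over $k\in\Z$ is exactly what the paper proves in Lemma~\ref{lem:3.2} via Lerch's formula $\partial_z\zeta_{\mathfrak{hur}}(0,s)=\log\Gamma(s)-\tfrac{1}{2}\log 2\pi$ (yielding Corollary~\ref{cor:3.2.1}), and your final Euler-product identification is the paper's concluding lemma, which likewise just invokes the Lefschetz fixed-point theorem. Your closing caveat about matching the signed Lefschetz count with the unsigned orbit count is a point the paper glosses over entirely, so on that step your proposal is, if anything, more careful than the published argument.
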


We first see that the leafwise cohomological expression is defined for $(M,\Fh_{M},\phi^{t})$. 
Recall that regularized determinants $\det_{\infty}(\Theta|V)$ is defined if the following condition (1) and (2) hold:

(1) $V$ is a complex vector space of countable dimension.
An operator $\Theta$ has finitely many eigenvalues whose eigen-space is of finite dimension. Then $V$ is the direct sum of eigen-spaces.

(2) Under condition (1), let $\Sp(\Theta|V)$ be the set of eigenvalues of $\Theta$ with multiplicities.
We assume that the Dirichlet series
\begin{equation*}
    \sum_{\alpha\ne0\in\Sp(\Theta|V)}\frac{1}{\alpha^{s}}\mbox{ with }\alpha^{-s}=|\alpha|^{-s}e^{-is(\Arg{\alpha})},-\pi<\Arg{\alpha}\le\pi
\end{equation*}
converges absolutely for $\mbox{Re}{s}\gg0$ and has an analytic continuation to the half plane $\mbox{Re}{s}>-\epsilon$ for some $\epsilon>0$ which is holomorphic at $s=0$.

The following theorem implies that the regularized determinants $\det_{\infty}(s \cdot \mathrm{id} - \Theta | H^{i}_{\Fh}(M,\C))$ satisfy the condition (1) for $s\in\C$. 

\begin{theorem}
\label{thm:3.1}
$\Sp(\Theta|H^{i}_{\Fh}(M,\C))$ can be described in terms of $\Sp(\varphi^{*}|H^{i}(S,\C))$
\begin{equation}
\label{eq_eigenvalues}
    \Sp(\Theta|H^{i}_{\Fh}(M,\C))=\left\{ \frac{\log \alpha + 2{\pi}{i}{v}}{\log{r}} |\alpha\in\Sp(\varphi^{*}|H^{i}(S,\C)), v\in\Z \right\}.
\end{equation}

\end{theorem}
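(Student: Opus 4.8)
The plan is to transport the problem, via the isomorphism $(H^{i}_{\Fh}(M),\Theta)\cong(C^{\infty}_{\varphi}(\R,H^{i}(S)),\frac{d}{dt})$ established just above, to the concrete path model and to compute the spectrum of $\frac{d}{dt}$ there. Complexifying throughout, it suffices to determine $\Sp(\frac{d}{dt}\,|\,C^{\infty}_{\varphi}(\R,H^{i}(S,\C)))$, where the subscript $\varphi$ records the monodromy condition $c(t+\log r)=\varphi^{*}(c(t))$ induced on cohomology. I will write $V:=H^{i}(S,\C)$, a finite-dimensional complex vector space on which $\varphi^{*}$ acts, and regard an element of $C^{\infty}_{\varphi}(\R,V)$ as a smooth section of the flat $V$-bundle over the circle $\R/\log r\,\Z$ whose monodromy is $\varphi^{*}$.

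Next I would solve the eigenvalue equation directly. An eigenvector of $\frac{d}{dt}$ with eigenvalue $\lambda$ is a path satisfying $c'(t)=\lambda c(t)$, hence $c(t)=e^{\lambda t}v$ for some $v\in V\setminus\{0\}$. Imposing the monodromy condition gives $e^{\lambda\log r}e^{\lambda t}v=e^{\lambda t}\varphi^{*}(v)$, i.e. $\varphi^{*}(v)=e^{\lambda\log r}v$. Thus $e^{\lambda t}v$ is an eigenvector precisely when $v$ is an eigenvector of $\varphi^{*}$ for the eigenvalue $\alpha:=e^{\lambda\log r}$; equivalently $\alpha\in\Sp(\varphi^{*}|V)$ and $\lambda\log r\in\log\alpha+2\pi i\Z$. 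Solving for $\lambda$ yields exactly the set on the right-hand side of (\ref{eq_eigenvalues}), namely $\lambda=(\log\alpha+2\pi i v)/\log r$ with $\alpha\in\Sp(\varphi^{*}|H^{i}(S,\C))$ and $v\in\Z$.

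Finally I must check that these eigenvalues exhaust the spectrum and occur with the correct multiplicities, i.e. that $C^{\infty}_{\varphi}(\R,V)$ is the (generalized) eigenspace sum required by condition (1). I would decompose $V$ into the $\varphi^{*}$-invariant generalized eigenspaces $V=\bigoplus_{\alpha}V_{\alpha}$; this decomposition is preserved by both $\frac{d}{dt}$ and the monodromy, so it splits the path space accordingly and reduces everything to a single $\alpha$. On $V_{\alpha}$, writing $c(t)=e^{(\log\alpha/\log r)t}g(t)$ turns the monodromy condition into periodicity of $g$ twisted by the unipotent part of $\varphi^{*}$, and a Fourier expansion of $g$ produces the eigenfunctions together with their generalized companions. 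The main obstacle is precisely this last point: when $\varphi^{*}$ is not semisimple, $\frac{d}{dt}$ is not diagonalizable either, and one must verify that the nilpotent part of $\varphi^{*}$ contributes generalized eigenvectors so that the algebraic multiplicity of each $\lambda$ matches the algebraic multiplicity of the corresponding $\alpha$ in $\Sp(\varphi^{*}|H^{i}(S,\C))$, and that the Fourier decomposition converges in the relevant $C^{\infty}$-topology so that no part of the spectrum is lost.
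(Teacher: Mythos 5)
Your proposal is correct and takes essentially the same route as the paper's proof: transport the eigenvalue problem through the isomorphism $\Psi$ of Theorem \ref{thm:2.1}, solve $c'(t)=\lambda c(t)$ to get $c(t)=e^{\lambda t}v$, and read off $\varphi^{*}(v)=e^{\lambda\log r}v$ from the monodromy condition, with the converse supplied by the same explicit exponential paths $t\mapsto e^{((\log\alpha+2\pi i v)/\log r)t}[\mu]$. Your closing paragraph on generalized eigenspaces, the unipotent part of $\varphi^{*}$, and Fourier convergence actually goes beyond the paper, whose proof establishes only the set equality of spectra and leaves the direct-sum/multiplicity verification of condition (1) implicit.
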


\begin{proof}
Let $[\omega_{\Fh}] \in H^{i}_{\Fh}(M,\C)$ be an eigenvector of the infinitesimal operator $\Theta$ with an eigenvalue $\alpha\in\C$:
\begin{equation*}
     \Theta [\omega_{\Fh}] =\alpha\cdot[\omega_{\Fh}].
\end{equation*}
By using Theorem \ref{thm:2.1}, we have the following differential equation:
\begin{equation*}
    \frac{d}{dt}\Psi([\omega_{\Fh}])(t)=\alpha\cdot\Psi([\omega_{\Fh}])(t)
\end{equation*}
Then the solution is given as a cohomological class on $S$.
\begin{equation*}
    \Psi([\omega_{\Fh}])(t)=e^{\alpha{t}}\Psi([\omega_{\Fh}])(0)
\end{equation*}
If we note that $\Psi([\omega_{\Fh}])(t+\log{r})=\varphi^{*} (\Psi([\omega_{\Fh}])(t))$, $\Psi([\omega_{\Fh}])(t)$ is an eigenvector of $\varphi^{*}$ with the eigenvalue $e^{\alpha\log{r}}=r^{\alpha}$ for a fixed $t$ in $S^{1}$:
\begin{equation*}
    \begin{split}
    \Psi([\omega_{\Fh}])(t+\log{r})&=e^{\alpha\log{r}} \Psi([\omega_{\Fh}])(t)\\
    &=\varphi^{*} (\Psi([\omega_{\Fh}])(t)).
    \end{split}
\end{equation*}
Thus, for an eigenvalue $\alpha \in \Sp(\Theta|H^{i}_{\Fh}(M,\C))$ , $r^{\alpha}=e^{\alpha \log{r}}$ should be the eigenvalue of $\varphi^{*}$ on the $H^{i}(S,\C)$.

Conversely, for an eigenvector $[\mu] \in H^{i}(S,\C)$  of $\varphi^{*}$ with an eigenvalue $\beta$, we suspend it to foliation cohomological classes $[\mu_{\Fh}]_{v}$ for all $v\in\Z$ which are defined by:
\begin{equation*}
    \Psi([\mu_{\Fh}]_{v})(t) := e^{(\frac{\log\beta+2\pi {i}{v}}{\log{r}})t}[\mu]\ \mathrm{for} \ t \in \R/\log{r}\,\Z,
\end{equation*}
where $i$ is $\sqrt{-1}$. They satisfy the boundary condition and are eigenvectors of $\Theta$ on $H^{i}_{\Fh}(M,\C)$ with the eigenvalues $\frac{\log \beta + 2 \pi{i}{v}}{\log{r}}$ for $\forall v \in \Z$. Therefore, $\{ \frac{\log \beta + 2 \pi{i}{v}}{\log{r}} |v \in \Z \}$ is the corresponding eigenvalues of the eigenvalue $\beta$.
\end{proof}
To show that the condition (2) holds, we interpret the Dirichlet series by using the Hurwitz zeta function 
\begin{equation}\label{eq:3.2.1}
\begin{split}
    \tr(s \cdot \mathrm{id} - \Theta | H^{i}_{\Fh}(M,\C))^{-z}&=\sum_{\alpha\in\Sp(\varphi^{*}_{i})}\sum_{v\in\Z}(s-\frac{\log{\alpha}+2\pi{i}{v}}{\log{r}})^{-z}\\
    &=\sum_{\alpha\in\Sp(\varphi^{*}_{i})}\sum_{v\in\Z} \left\{ \frac{2\pi{i}{v}}{\log{r}}\left(\frac{s\cdot\log{r}-\log \alpha}{2{\pi}{i}}+v\right) \right\}^{-z},
\end{split}
\end{equation}
where $\varphi^{*}_{i}$ denotes $\varphi^{*}$ on $H^{i}(S)$.

We recall that the Hurwitz zeta function $\zeta_{\mathfrak{hur}}(z,s)$ is  defined by:
\begin{equation*}
    \zeta_{\mathfrak{hur}}(z,s):=\sum_{n=0}^{\infty}(s+n)^{-z}\,\mathrm{for}\,\mathrm{Re}(z)>1,\,\mathrm{Re}(s)>0.
\end{equation*}
It is known that the series is absolutely convergent and can be extended to a meromorphic function defined for all $z\ne1$.
More generally we set
\begin{equation*}
    \zeta_{\eta,\mathfrak{hur}}(z,s)=\sum_{n=0}^{\infty}(\eta(s+n))^{-z},
\end{equation*}
where $\eta$ is a non-zero complex number.
If we denote  $\frac{2 \pi i}{\log{r}}$ and $\frac{s\cdot\log{r}-\log \alpha}{2{\pi}{i}}$ in (\ref{eq:3.2.1}) simply by $\eta$ and $s_{\alpha}$, respectively, then the series (\ref{eq:3.2.1}) is expressed by using the Hurwitz zeta function
\begin{equation*}
    \sum_{\alpha\in\Sp(\varphi^{*}_{i})}\zeta_{\eta,\mathfrak{hur}}(z,s_{\alpha})+\zeta_{-\eta,\mathfrak{hur}}(z,-s_{\alpha})-(\eta{s_{\alpha}})^{-z}.
\end{equation*}
Since the condition (2) holds from the property of the Hurwitz zeta function, the regularized determinant is defined for $(M,\Fh_{M},\phi^{t})$.

Next we show that the leafwise cohomological expression coincides with the dynamical zeta function of $(M,\Fh_{M},\phi^{t})$. We define a regularized product by 
\begin{equation*}
    \begin{split}
    \prod_{v\in\Z}\eta(s+v)&=\prod_{n\in\N}\eta(s+n)\cdot(-\eta)(-s+n)/(\eta{s})\\
    &:=\exp(-\partial_{z}\zeta_{\eta,\mathfrak{hur}}(0,s)-\partial_{z}\zeta_{-\eta,\mathfrak{hur}}(0,-s))\cdot(\eta{s})^{-1},
    \end{split}
\end{equation*}
where $\eta$ is a non-zero complex number. Then the lemma follows from properties of Hurwitz zeta function.
\begin{lemma}
\label{lem:3.2}
The regularized product such as the following form can be computed by using properties of the Hurwitz zeta functions:
\begin{equation*}
    \prod_{v\in\Z}\eta(s+v)=1-e^{-2\pi{i}s}
\end{equation*}
\end{lemma}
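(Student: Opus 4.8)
The plan is to reduce the regularized product to classical special values of the Hurwitz zeta function. The first step is to exploit the homogeneity of $\zeta_{\eta,\mathfrak{hur}}$ in the scaling parameter: with the branch convention of the paper one has $\zeta_{\eta,\mathfrak{hur}}(z,s)=\eta^{-z}\zeta_{\mathfrak{hur}}(z,s)$ and likewise $\zeta_{-\eta,\mathfrak{hur}}(z,-s)=(-\eta)^{-z}\zeta_{\mathfrak{hur}}(z,-s)$, at least for $s$ in a convenient range (say $\mathrm{Re}(s)\in(0,1)$) where the factors $\eta(s+n)$ all lie on a single sheet; since the claimed right-hand side $1-e^{-2\pi i s}$ is entire in $s$, the identity will then extend to all admissible $s$ by analytic continuation. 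Here $\eta=2\pi i/\log r$ is a positive multiple of $i$, so in the principal branch $\log\eta=\log|\eta|+i\pi/2$ and $\log(-\eta)=\log|\eta|-i\pi/2$; keeping track of these two determinations is the only genuinely delicate bookkeeping in the argument.

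Next I would differentiate at $z=0$. Writing $\partial_z$ for $\partial/\partial z$ and applying the product rule to $\eta^{-z}\zeta_{\mathfrak{hur}}(z,s)$ gives
\begin{equation*}
\partial_z\zeta_{\eta,\mathfrak{hur}}(0,s)=-\log\eta\cdot\zeta_{\mathfrak{hur}}(0,s)+\partial_z\zeta_{\mathfrak{hur}}(0,s).
\end{equation*}
At this point I invoke the two classical facts $\zeta_{\mathfrak{hur}}(0,s)=\tfrac12-s$ and Lerch's formula $\partial_z\zeta_{\mathfrak{hur}}(0,s)=\log\Gamma(s)-\tfrac12\log(2\pi)$, together with their analogues at $-s$ (using $\zeta_{\mathfrak{hur}}(0,-s)=\tfrac12+s$). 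Substituting into the definition of the regularized product and collecting terms yields
\begin{equation*}
-\partial_z\zeta_{\eta,\mathfrak{hur}}(0,s)-\partial_z\zeta_{-\eta,\mathfrak{hur}}(0,-s)=\log\eta\left(\tfrac12-s\right)+\log(-\eta)\left(\tfrac12+s\right)-\log\Gamma(s)-\log\Gamma(-s)+\log(2\pi).
\end{equation*}

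The third step is to exponentiate and simplify. The Gamma factors are handled by the reflection formula: $\Gamma(s)\Gamma(1-s)=\pi/\sin(\pi s)$ gives $\Gamma(s)\Gamma(-s)=-\pi/(s\sin(\pi s))$, so that $2\pi/(\Gamma(s)\Gamma(-s))=-2s\sin(\pi s)$ and the stray factor $(\eta s)^{-1}$ in the definition cancels the $s$, leaving $-2\sin(\pi s)/\eta$. The two logarithmic terms combine, via $\log\eta=\log|\eta|+i\pi/2$ and $\log(-\eta)=\log|\eta|-i\pi/2$, to $\exp[\log|\eta|-i\pi s]=|\eta|e^{-i\pi s}$, and then $|\eta|/\eta=e^{-i\pi/2}=-i$. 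Assembling these pieces gives
\begin{equation*}
\prod_{v\in\Z}\eta(s+v)=-2\sin(\pi s)\cdot(-i)\,e^{-i\pi s}=2i\sin(\pi s)\,e^{-i\pi s}.
\end{equation*}

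Finally, writing $2i\sin(\pi s)=e^{i\pi s}-e^{-i\pi s}$ turns the last expression into $(e^{i\pi s}-e^{-i\pi s})e^{-i\pi s}=1-e^{-2\pi i s}$, which is the claim. The step that requires the most care is the first one: verifying the scaling identity $\zeta_{\eta,\mathfrak{hur}}(z,s)=\eta^{-z}\zeta_{\mathfrak{hur}}(z,s)$ for the prescribed branch of the complex power, and ensuring the two determinations $\log(\pm\eta)=\log|\eta|\pm i\pi/2$ are used consistently throughout. Once the branches are pinned down, the remaining manipulations are just the standard special-value and reflection identities for the Hurwitz zeta and Gamma functions.
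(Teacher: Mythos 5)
Your proof is correct and follows essentially the same route as the paper's: the scaling identity $\zeta_{\eta,\mathfrak{hur}}(z,s)=\eta^{-z}\zeta_{\mathfrak{hur}}(z,s)$, Lerch's formula together with $\zeta_{\mathfrak{hur}}(0,s)=\tfrac{1}{2}-s$, and the Gamma reflection formula, which the paper merely packages as the auxiliary identity $\frac{1}{s}\left(\frac{\Gamma(s)}{\sqrt{2\pi}}\right)^{-1}\left(\frac{\Gamma(-s)}{\sqrt{2\pi}}\right)^{-1}=e^{\pi i(\frac{1}{2}+s)}(1-e^{-2\pi i s})$. Your explicit branch bookkeeping $\log(\pm\eta)=\log|\eta|\pm i\pi/2$ for $\eta=2\pi i/\log r$ is the specialization of the paper's case $0\le\Arg\eta<\pi$, and is if anything more careful than the original.
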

\begin{proof}
It is known for the Hurwitz zeta functions that:
\begin{equation*}
    \partial_{z}\zeta_{\mathfrak{hur}}(0,s)=\log\Gamma(s)-\frac{1}{2}\log2\pi.
\end{equation*}
Then we have
\begin{equation*}
    \exp(-\partial_{z}\zeta_{\eta,\mathfrak{hur}}(0,s))=\eta^{\frac{1}{2}-s}\left( \frac{\Gamma(s)}{\sqrt{2\pi}} \right)^{-1}.
\end{equation*}
By using the formula:
\begin{equation*}
    \frac{1}{s}\left( \frac{\Gamma(s)}{\sqrt{2\pi}} \right)^{-1}\left( \frac{\Gamma(-s)}{\sqrt{2\pi}} \right)^{-1}=e^{\pi{i}(\frac{1}{2}+s)}(1-e^{-2\pi{i}{s}}),
\end{equation*}
we get the following equality:
\begin{equation*}
    \begin{split}
        \prod_{v\in\Z}\eta(s+v)&=\eta^{\frac{1}{2}-s}\left( \frac{\Gamma(s)}{\sqrt{2\pi}} \right)^{-1}\cdot(-\eta)^{\frac{1}{2}+s}\left( \frac{\Gamma(-s)}{\sqrt{2\pi}} \right)^{-1}\cdot(\eta{s})^{-1}\\
        &=(\eta)^{-\frac{1}{2}+s}(-\eta)^{\frac{1}{2}+s}\cdot\frac{1}{s}\left( \frac{\Gamma(s)}{\sqrt{2\pi}} \right)^{-1}\left( \frac{\Gamma(-s)}{\sqrt{2\pi}} \right)^{-1}\\
        &=\begin{cases}
        1-e^{-2\pi{i}{s}}&\,\mathrm{If}\,\,0\leq\Arg{\eta}<\pi\\
        1-e^{2\pi{i}{s}}&\,\mathrm{If}\,\,-\pi\leq\Arg{\eta}<0
        \end{cases}
    \end{split}
\end{equation*}
In this case, since $0\leq\Arg{\eta}<\pi$, we get the statement of the lemma.
\end{proof}
We denote the leafwise cohomological expression by the regularized product as follows
\begin{equation*}
\begin{split}
    \prod_{i}^{d}\det_{\infty}(s\cdot\mbox{id}-\Theta|H^{i}_{\Fh}(M,\C))^{(-1)^{i+1}}&\overset{(\ref{eq_eigenvalues})}{=}\prod_{i}\prod_{\alpha\in\Sp(\varphi^{*}_{i})}\prod_{v\in\Z} (s-\frac{\log{\alpha}+2\pi{i}{v}}{\log{r}})^{(-1)^{i+1}}\\
    &=\prod_{i}\prod_{\alpha\in\Sp(\varphi^{*}_{i})}\prod_{v\in\Z}  \left\{ \frac{2\pi{i}{v}}{\log{r}}\left(\frac{s\cdot\log{r}-\log \alpha}{2{\pi}{i}}+v\right) \right\}^{(-1)^{i+1}}.
\end{split}
\end{equation*}
By using the Lemma \ref{lem:3.2}, we have the following product form
\begin{equation*}
    \prod_{i}^{d}\det_{\infty}(s\cdot\mbox{id}-\Theta|H^{i}_{\Fh}(M,\C))^{(-1)^{i+1}}=\prod_{i}\prod_{\alpha\in\Sp(\varphi^{*}_{i})}(1-\exp(\log\alpha-s\cdot\log{r}))^{(-1)^{i+1}}.
\end{equation*}
The corollary follows from the product form.
\begin{corollary}
\label{cor:3.2.1}
The leafwise cohomological expression is described with the cohomology of the fiber $S$:
\begin{equation}\label{eq:3.5}
    \prod_{i}^{d}\det_{\infty}(s\cdot\mbox{id}-\Theta|H^{i}_{\Fh}(M,\C))^{(-1)^{i+1}} =\prod_{i=0}^{d}\det(1-\varphi^{*}\cdot r^{-s}|H^{i}(S,\C))^{(-1)^{i+1}}.
\end{equation}
\end{corollary}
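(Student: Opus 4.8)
The plan is to begin from the product form established immediately before the statement,
\begin{equation*}
    \prod_{i}^{d}\det_{\infty}(s\cdot\id-\Theta|H^{i}_{\Fh}(M,\C))^{(-1)^{i+1}}=\prod_{i}\prod_{\alpha\in\Sp(\varphi^{*}_{i})}(1-\exp(\log\alpha-s\cdot\log{r}))^{(-1)^{i+1}},
\end{equation*}
and to recognize the inner finite product over the spectrum as a characteristic determinant on $H^{i}(S,\C)$. The two remaining ingredients are purely algebraic, so the argument will be short.

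First I would simplify the exponential factor. Since $\exp(\log\alpha-s\cdot\log{r})=\alpha\cdot r^{-s}$, each factor collapses to $1-\alpha r^{-s}$, so that the right-hand side above reads $\prod_{i}\prod_{\alpha\in\Sp(\varphi^{*}_{i})}(1-\alpha r^{-s})^{(-1)^{i+1}}$. This is the only manipulation involving the branch of the logarithm, and it is harmless because $\log\alpha$ and $s\cdot\log{r}$ enter only through the single exponential.

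Next, I would invoke that $S$ is closed, so each $H^{i}(S,\C)$ is finite-dimensional and $\Sp(\varphi^{*}_{i})$ is a finite multiset of eigenvalues counted with algebraic multiplicity. The endomorphism $\id-r^{-s}\varphi^{*}$ of $H^{i}(S,\C)$ then has eigenvalues $1-r^{-s}\alpha$ as $\alpha$ ranges over $\Sp(\varphi^{*}_{i})$; since the determinant of an endomorphism of a finite-dimensional vector space equals the product of its eigenvalues with multiplicity (by the Jordan normal form, hence independently of whether $\varphi^{*}$ is semisimple), we obtain
\begin{equation*}
    \prod_{\alpha\in\Sp(\varphi^{*}_{i})}(1-\alpha r^{-s})=\det(1-\varphi^{*}\cdot r^{-s}|H^{i}(S,\C)).
\end{equation*}
Substituting this identity into the product over $i$ yields the asserted equality (\ref{eq:3.5}).

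There is no substantial obstacle here: the analytic content has already been absorbed into the regularized-product computation of Lemma \ref{lem:3.2}, and what remains is the elementary observation that a finite product of $(1-\alpha r^{-s})$ over the spectrum reproduces the characteristic determinant. The sole point deserving a word of care is the bookkeeping of multiplicities—one must read $\Sp(\varphi^{*}_{i})$ as the spectrum with multiplicities so that the product recovers $\det(1-\varphi^{*}\cdot r^{-s}|H^{i}(S,\C))$ even when $\varphi^{*}$ fails to be diagonalizable on $H^{i}(S,\C)$.
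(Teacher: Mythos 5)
Your proposal is correct and follows essentially the same route as the paper, which derives the corollary directly from the product form $\prod_{i}\prod_{\alpha\in\Sp(\varphi^{*}_{i})}(1-\exp(\log\alpha-s\cdot\log{r}))^{(-1)^{i+1}}$ obtained via Lemma \ref{lem:3.2} and states only that ``the corollary follows from the product form.'' You merely make explicit the two elementary steps the paper leaves implicit---that $\exp(\log\alpha-s\cdot\log r)=\alpha r^{-s}$ and that the finite product over the spectrum with algebraic multiplicities recovers $\det(1-\varphi^{*}\cdot r^{-s}|H^{i}(S,\C))$---with the remark on non-semisimple $\varphi^{*}$ being a sensible extra precaution.
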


The relation between periodic orbits of $\Z$-action on $S$ and $\R$-action on $M$ gives the following lemma.

\begin{lemma}
the dynamical zeta function $\zeta(M;s)$ of $(M,\Fh_{M},\phi^{t})$ has the cohomological expression in terms of the cohomology groups of the fiber $S$
\begin{equation}\label{eq:3.6}
    \zeta(M;s) =\prod_{i=0}^{d}\det(1-\varphi^{*}\cdot r^{-s}|H^{i}(S,\C))^{(-1)^{i+1}}.
\end{equation}
\end{lemma}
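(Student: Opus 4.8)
The plan is to establish (\ref{eq:3.6}) by comparing the logarithmic expansions of both sides as power series in the single variable $u:=r^{-s}$, so that the claim reduces to a coefficientwise identity furnished by the Lefschetz fixed point formula. I work throughout in the region $\mathrm{Re}(s)\gg0$, where all the series below converge absolutely, and then appeal to uniqueness of analytic continuation to promote the result to an identity of holomorphic functions. Since Corollary \ref{cor:3.2.1} already identifies the right-hand side of (\ref{eq:3.6}) with the leafwise cohomological expression, it suffices to match $\zeta(M;s)$ against $\prod_{i=0}^{d}\det(1-\varphi^{*}u\mid H^{i}(S,\C))^{(-1)^{i+1}}$.

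First I would rewrite the left-hand side as a product over the periodic orbits of $\varphi$. Each closed orbit $\gamma$ of the suspended flow $\phi^{t}$ comes from a periodic orbit $\mathfrak{o}$ of $\varphi$ on $S$ with $N(\gamma)=r^{|\mathfrak{o}|}$, so that $N(\gamma)^{-s}=u^{|\mathfrak{o}|}$ and
\begin{equation*}
\log\zeta(M;s)=\sum_{\mathfrak{o}}\sum_{k\ge1}\frac{1}{k}u^{k|\mathfrak{o}|}=\sum_{n\ge1}\frac{u^{n}}{n}N_{n},\qquad N_{n}:=\#\,\mathrm{Fix}(\varphi^{n}).
\end{equation*}
The final equality is the standard regrouping along $n=k|\mathfrak{o}|$ together with $N_{n}=\sum_{p\mid n}p\,a_{p}$, where $a_{p}$ counts the primitive orbits of period $p$; the hypothesis that $\varphi$ has only countably many periodic points ensures each $N_{n}$ is finite and the rearrangement is valid.

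For the right-hand side I would expand each Euler factor by means of $\log\det(1-uA)=-\sum_{n\ge1}\frac{u^{n}}{n}\tr(A^{n})$ applied to $A=\varphi^{*}$ on $H^{i}(S,\C)$. Collecting the alternating signs gives
\begin{equation*}
\log\prod_{i=0}^{d}\det(1-\varphi^{*}u\mid H^{i}(S,\C))^{(-1)^{i+1}}=\sum_{n\ge1}\frac{u^{n}}{n}\sum_{i=0}^{d}(-1)^{i}\tr(\varphi^{*n}\mid H^{i}(S,\C))=\sum_{n\ge1}\frac{u^{n}}{n}L(\varphi^{n}),
\end{equation*}
where $L(\varphi^{n})$ denotes the Lefschetz number of $\varphi^{n}$. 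Comparing with the preceding display, the lemma is equivalent to the identity $N_{n}=L(\varphi^{n})$ for every $n\ge1$.

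The heart of the matter, and the step I expect to be the main obstacle, is precisely this identity. Because $\varphi$ is Anosov, at each fixed point $x$ of $\varphi^{n}$ the differential $d\varphi^{n}_{x}$ is hyperbolic, so $1$ is not an eigenvalue and the graph of $\varphi^{n}$ meets the diagonal transversally; hence the fixed points are isolated and nondegenerate, and the Lefschetz fixed point theorem yields $L(\varphi^{n})=\sum_{x\in\mathrm{Fix}(\varphi^{n})}\mathrm{ind}_{x}(\varphi^{n})$. What remains is to verify that each index $\mathrm{ind}_{x}(\varphi^{n})=\mathrm{sign}\,\det(\mathrm{id}-d\varphi^{n}_{x})$ equals $+1$, so that the signed Lefschetz count collapses to the honest cardinality $N_{n}$; this is exactly where hyperbolicity together with the orientation-preserving hypothesis must be invoked to control the signs of the stable and unstable contributions. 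Once $N_{n}=L(\varphi^{n})$ is secured for all $n$, the two power series agree term by term, and combining this with Corollary \ref{cor:3.2.1} completes the proof.
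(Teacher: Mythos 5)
Your overall route is the same as the paper's: the paper likewise rewrites $\zeta(M;s)$ as $\prod_{\mathfrak{o}}(1-e^{-s|\mathfrak{o}|\log{r}})^{-1}$ over periodic orbits of the $\Z$-action on $S$ and then declares that the lemma ``follows from the Lefschetz fixed-point theorem''; your logarithmic regrouping $\log\zeta(M;s)=\sum_{n\ge1}\frac{u^{n}}{n}N_{n}$ together with $\log\det(1-uA)=-\sum_{n\ge1}\frac{u^{n}}{n}\tr(A^{n})$ simply makes that one-line step explicit, and it correctly reduces (\ref{eq:3.6}) to the identity $N_{n}=\Lambda(\varphi^{n})$ for all $n\ge1$. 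The problem is that your proposal stops at exactly this point: you announce that ``what remains is to verify that each index equals $+1$'' and never verify it, so the only substantive step of the argument is missing rather than proved.

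Worse, the deferred claim is false under the hypotheses you invoke. For a hyperbolic fixed point $x$ of $\varphi^{n}$ the index is $\mathrm{ind}_{x}(\varphi^{n})=\mathrm{sign}\det(\mathrm{id}-d\varphi^{n}_{x})=(-1)^{u}$, where $u$ is the number of real eigenvalues of $d\varphi^{n}_{x}$ exceeding $1$ (complex-conjugate pairs and real eigenvalues $<1$, including those $<-1$, contribute positive factors); orientation preservation only forces $\det d\varphi^{n}_{x}>0$ and does not control this sign. Concretely, for the orientation-preserving Anosov diffeomorphism $\varphi=\left(\begin{smallmatrix}2&1\\1&1\end{smallmatrix}\right)$ of $T^{2}$, with eigenvalue $\lambda=(3+\sqrt{5})/2$, every fixed point of $\varphi^{n}$ has index $\mathrm{sign}\bigl((1-\lambda^{n})(1-\lambda^{-n})\bigr)=-1$, so that $\Lambda(\varphi^{n})=2-\lambda^{n}-\lambda^{-n}=-N_{n}$ and your two power series disagree term by term. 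Thus $N_{n}=\Lambda(\varphi^{n})$ is a genuine additional hypothesis, not a consequence of hyperbolicity plus orientation preservation: one must either assume all periodic points have index $+1$, or interpret $\zeta(M;s)$ as the index-weighted (Lefschetz) zeta function --- which is in effect the reading implicit in the paper's own terse appeal to the Lefschetz fixed-point theorem. (Note also that Anosov is not a standing assumption of the paper, which only requires countably many periodic orbits and lists Anosov as an example; some nondegeneracy of $\mathrm{Fix}(\varphi^{n})$ is indeed needed even to state the fixed-point formula as a finite sum, so adding such a hypothesis is reasonable --- it is just insufficient for the sign claim on which your proof, and the lemma as literally stated, depends.)
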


\begin{proof}
We have the bijective between periodic orbits of $\Z$-action on $S$ and $\R$-action on $M$.
If we denote by $\mathfrak{o}$ a corresponding periodic orbit of $\Z$-action on $S$ for a periodic orbit $\gamma$ of $\R$-action on $M$, the norm $N(\gamma)$ is defined by $\log{N(\gamma)}=|\mathfrak{o}|\log{r}$ where $|\mathfrak{o}|$ is the period of $\mathfrak{o}$.
Therefore we have
\begin{equation*}
    \zeta(M;s)=\prod_{\mathfrak{o}}(1-e^{-s|\mathfrak{o}|\log{r}})^{-1},
\end{equation*}
where $\mathfrak{o}$ runs over periodic orbits of the $\Z$-action on $S$.
Then the lemma follows from the Lefschetz fixed-point theorem.
\end{proof}

\begin{proof}[Proof of theorem 3.1]
The assertion follows from the composite of (\ref{eq:3.5}) and (\ref{eq:3.6}).
\end{proof}

\subsection{Functional equations}

The zeta function $\zeta(M;s)$ has the functional equation as follows:
The closed manifold $S$ has the symmetric property which is given by a perfect pairing
\begin{equation*}
    \cup:H^{i}(S) \times H^{d-i}(S) \rightarrow H^{d}(S).
\end{equation*}
The pullback of the diffeomorphism $\varphi$ of $S$ is compatible with the pairing
\begin{equation*}
    \varphi^{*}(v)\cup\varphi^{*}(w)=\varphi^{*}(v\cup{w})\quad\mathrm{for}\,v\in{H}^{i}(S),w\in{H}^{d-i}(S).
\end{equation*}
Since $\varphi^{*}$ acts on $H^{d}(S)$ as the identity, the following holds (\cite{Ha},Appendix C, Lemma 4.3.):
\begin{equation}\label{eq:3.3}
    \det(\varphi^{*}|H^{d-i}(S))=\frac{1}{\det(\varphi^{*}|H^{i}(S))}
\end{equation}
and
\begin{equation}\label{eq:3.4}
    \det(1-\varphi^{*}t|H^{d-i}(S))=\frac{(-t)^{\beta(i)}}{\det(\varphi^{*}|H^{i}(S))}\det(1-\varphi^{*}t^{-1}|H^{i}(S)),
\end{equation}
where $\beta(i)$ is the $i$-th Betti number.
\begin{theorem}[Functional equation]
\label{functional_eq}
The zeta function is symmetrical about $Re(s)=0$
\begin{equation*}
    \zeta(M;s)=(-r^{s})^{\chi(S)} \cdot \zeta(M;-s),
\end{equation*}
where $\chi(S)$ is the Euler characteristic of $S$.
\end{theorem}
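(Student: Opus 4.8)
The plan is to start from the cohomological expression for the zeta function established in \eqref{eq:3.6},
\[
\zeta(M;s) = \prod_{i=0}^{d}\det(1-\varphi^{*} r^{-s}\mid H^{i}(S,\C))^{(-1)^{i+1}},
\]
and to transform each Euler factor by Poincaré duality so that the product reassembles into $\zeta(M;-s)$ up to an explicit prefactor. Writing $L_{i}(t):=\det(1-\varphi^{*}t\mid H^{i}(S,\C))$ and $\delta_{i}:=\det(\varphi^{*}\mid H^{i}(S,\C))$ for brevity, the two duality relations \eqref{eq:3.3} and \eqref{eq:3.4} combine—after replacing $i$ by $d-i$ and using $\beta(d-i)=\beta(i)$ together with $\delta_{d-i}=\delta_{i}^{-1}$—into the single identity
\[
L_{i}(t) = (-t)^{\beta(i)}\,\delta_{i}\,L_{d-i}(t^{-1}).
\]
This is the engine of the argument: it trades the degree $i$ for $d-i$ and inverts the argument $t=r^{-s}\mapsto t^{-1}=r^{s}$, which is exactly the reflection $s\mapsto -s$ appearing in the functional equation.

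First I would substitute this identity, with $t=r^{-s}$, into every factor of \eqref{eq:3.6}. This splits the product into three families of contributions,
\[
\zeta(M;s)=\Bigl(\prod_{i=0}^{d}(-r^{-s})^{\beta(i)(-1)^{i+1}}\Bigr)\Bigl(\prod_{i=0}^{d}\delta_{i}^{(-1)^{i+1}}\Bigr)\Bigl(\prod_{i=0}^{d}L_{d-i}(r^{s})^{(-1)^{i+1}}\Bigr),
\]
after which I would re-index the last family by $j=d-i$ so that it is expressed back in terms of the $L_{j}(r^{s})$ and can be compared with the definition of $\zeta(M;-s)$.

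Next I would collect the scalar prefactors. The $\beta(i)$-powers assemble into $(-r^{-s})^{\sum_{i}(-1)^{i+1}\beta(i)}$; since $\sum_{i}(-1)^{i+1}\beta(i)=-\chi(S)$ and $\chi(S)$ is an integer, this equals $(-r^{s})^{\chi(S)}$, precisely the claimed prefactor. The determinant factors $\prod_{i}\delta_{i}^{(-1)^{i+1}}$ are then shown to reduce to the identity: pairing the degree $i$ term with the degree $d-i$ term and invoking \eqref{eq:3.3}, each such pair contributes $\delta_{i}^{(-1)^{i+1}}\delta_{i}^{-(-1)^{i+1}}=1$, so that the surviving product of reflected Euler factors is exactly $\zeta(M;-s)$.

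The step I expect to be the main obstacle is the careful sign and index bookkeeping in this last collection. Concretely, after re-indexing one must verify that the exponent $(-1)^{(d-j)+1}$ attached to $L_{j}(r^{s})$ agrees with the exponent $(-1)^{j+1}$ occurring in $\zeta(M;-s)$, which is where the parity of $d$ enters; and one must separately treat the self-dual middle degree $i=d-i$, for which \eqref{eq:3.3} only yields $\delta_{d/2}=\pm1$ rather than outright cancellation, so the compatibility of the resulting sign with the clean form of the functional equation has to be checked. Once the prefactor is identified as $(-r^{s})^{\chi(S)}$ and the determinant contributions are reduced to the identity, matching the remaining product against \eqref{eq:3.6} for $-s$ completes the proof.
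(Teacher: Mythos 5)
Your proposal is correct and follows essentially the same route as the paper: substitute the Poincar\'e duality identities \eqref{eq:3.3} and \eqref{eq:3.4} (with $i$ replaced by $d-i$) into the cohomological expression \eqref{eq:3.6}, extract the prefactor $(-r^{-s})^{-\chi(S)}=(-r^{s})^{\chi(S)}$ from the $\beta(i)$-powers, and cancel the $\det(\varphi^{*}|H^{i}(S))$ factors pairwise. The bookkeeping subtleties you flag --- the exponent $(-1)^{(d-j)+1}$ versus $(-1)^{j+1}$ after re-indexing (parity of $d$) and the self-dual middle degree, where \eqref{eq:3.3} yields only $\det(\varphi^{*}|H^{d/2}(S))=\pm1$ rather than outright cancellation --- are exactly the points the paper's three-line computation passes over silently, so your version is, if anything, more careful than the printed proof.
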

\begin{proof}
As the result of the corollary \ref{cor:3.2.1}, the zeta function can be described with the cohomology groups of $S$:
\begin{equation*}
    \begin{split}
            \zeta(M;s)&=\prod_{i=0}^{d}\det(1-\varphi^{*}\cdot r^{-s}|H^{d-i}(S,\C))^{(-1)^{i+1}}\\
            &\overset{(\ref{eq:3.4})}{=}\prod_{i=0}^{d}\left\{\frac{(-r^{-s})^{\beta(i)}}{\det(\varphi^{*}|H^{i}(S))}\det(1-\varphi^{*}r^{s}|H^{i}(S))\right \}^{(-1)^{i+1}}\\
            &\overset{(\ref{eq:3.3})}{=}(-r^{-s})^{-\chi(S)}\zeta(M;-s).
    \end{split}
\end{equation*}
\end{proof}

\subsection{Special values of \texorpdfstring{$\zeta(M;s)$}{}}

We compute special values of the dynamical zeta function $\zeta(M;s)$:
We define the special value of $\zeta(M;s)$ at $s=k$ by
\begin{equation*}
    \zeta(M;k)^{*}:=\lim_{s\rightarrow k}\zeta(M;s)\cdot (s-k)^{-\mathrm{ord}_{s=k}\zeta(M;s)},
\end{equation*}
where ${-\mathrm{ord}_{s=k}\zeta(M;s)}$ is the order of $\zeta(M;s)$ at $s=k$ for $k\in\C$.
Note that the order of $\zeta(M;s)$ at $s=k$ for $k\in\C$ is given by
\begin{equation*}
    \mathrm{ord}_{s=k}\zeta(M;s)=\sum_{i}(-1)^{i+1}\dim(H^{i}_{\Fh}(M)^{\Theta\sim k}),
\end{equation*}
where $H^{\sigma\sim{k}}$ is the eigenspace of $\sigma$ on $H$ whose eigenvalue is $k$.

We show the computation of special values of $\zeta(M;s)$ as follows.
\begin{theorem}
If we set the $m$-th Lefschetz number for $m\in\Z$ by 
\begin{equation*}
    \Lambda(\varphi^{m}):=\sum_{i}(-1)^{i}\mathrm{tr}(\varphi^{m*}|H^{i}(S)),
\end{equation*}        
the special value of $\zeta(M;s)$ at $s=k$ can be expressed  with the order of the function and the Lefschetz number.
\begin{equation*}
    \zeta(M;k)^{*}=(\log{r})^{\mathrm{ord}_{s=k}\zeta(M;s)}\exp(\sum_{m\ge1}\frac{r^{-km}\Lambda(\varphi^{m})+\mathrm{ord}_{s=k}\zeta(M;s)}{m}).
\end{equation*}
\end{theorem}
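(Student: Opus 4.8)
The plan is to read the behaviour of $\zeta(M;s)$ at $s=k$ directly off the finite cohomological product, split off the vanishing/polar part, and match the non-vanishing remainder to the Lefschetz numbers via the logarithmic expansion of the determinant. First I would start from Corollary~\ref{cor:3.2.1} together with~(\ref{eq:3.6}), writing each determinant as a product over eigenvalues (counted with multiplicity),
\[
\zeta(M;s)=\prod_{i=0}^{d}\det(1-\varphi^{*}r^{-s}\mid H^{i}(S,\C))^{(-1)^{i+1}}=\prod_{i}\prod_{\alpha\in\Sp(\varphi^{*}_{i})}(1-\alpha r^{-s})^{(-1)^{i+1}}.
\]
By Theorem~\ref{thm:3.1} a factor $1-\alpha r^{-s}$ vanishes at $s=k$ precisely when $\alpha=r^{k}$, equivalently when $k\in\Sp(\Theta\mid H^{i}_{\Fh}(M,\C))$, and each such zero is simple because $\frac{d}{ds}(1-r^{k-s})\big|_{s=k}=\log r\neq0$. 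Setting $n_{i}:=\dim(H^{i}_{\Fh}(M)^{\Theta\sim k})$, the multiplicity of $r^{k}$ in $\Sp(\varphi^{*}_{i})$, I factor
\[
\zeta(M;s)=(1-r^{k-s})^{\nu}\,h(s),\qquad \nu:=\mathrm{ord}_{s=k}\zeta(M;s)=\sum_{i}(-1)^{i+1}n_{i},
\]
where $h(s):=\prod_{i}\prod_{\alpha\neq r^{k}}(1-\alpha r^{-s})^{(-1)^{i+1}}$ is a finite product that is holomorphic and non-zero near $s=k$.

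The factor $(\log r)^{\nu}$ then comes out of the elementary limit $\lim_{s\to k}(1-r^{k-s})/(s-k)=\log r$, since
\[
\zeta(M;k)^{*}=\lim_{s\to k}\zeta(M;s)(s-k)^{-\nu}=\Big(\lim_{s\to k}\frac{1-r^{k-s}}{s-k}\Big)^{\nu}h(k)=(\log r)^{\nu}h(k).
\]
It remains to identify $h(k)$ with the exponential of the Lefschetz series. In the range $\mathrm{Re}(s)\gg0$ I would take logarithms and use $-\log\det(1-A)=\sum_{m\ge1}\frac1m\tr(A^{m})$ to get $\log\zeta(M;s)=\sum_{m\ge1}\frac{1}{m}r^{-sm}\Lambda(\varphi^{m})$. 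Peeling the $r^{k}$-eigenvalues off the trace gives $\Lambda(\varphi^{m})=-\nu\,r^{km}+\widetilde{\Lambda}_{m}$ with $\widetilde{\Lambda}_{m}:=\sum_{i}(-1)^{i}\sum_{\alpha\neq r^{k}}\alpha^{m}$, using $\sum_{i}(-1)^{i}n_{i}=-\nu$. Subtracting $-\nu\log(1-r^{k-s})=\nu\sum_{m\ge1}\frac1m r^{(k-s)m}$ then yields $\log h(s)=\sum_{m\ge1}\frac{1}{m}r^{-sm}\widetilde{\Lambda}_{m}$, and evaluating at $s=k$ (where $r^{-km}\widetilde{\Lambda}_{m}=r^{-km}\Lambda(\varphi^{m})+\nu$) gives
\[
\log h(k)=\sum_{m\ge1}\frac{r^{-km}\Lambda(\varphi^{m})+\nu}{m}.
\]
Exponentiating and inserting into $\zeta(M;k)^{*}=(\log r)^{\nu}h(k)$ produces the claimed formula.

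The main obstacle is the passage from the region of absolute convergence $\mathrm{Re}(s)\gg0$ to the special point $s=k$, i.e. justifying the term-by-term evaluation of the logarithmic series there. Note that $h(k)$ itself is unproblematic: it is a \emph{finite} product $\prod_{i}\prod_{\alpha\neq r^{k}}(1-\alpha r^{-k})^{(-1)^{i+1}}$, hence a well-defined nonzero number, and it is its own analytic continuation. The delicate point is that the series $\sum_{m}\frac1m r^{-km}\widetilde{\Lambda}_{m}$ converges absolutely to $\log h(k)$ only when $|\alpha|<r^{k}$ for every eigenvalue $\alpha\neq r^{k}$; in general one must interpret the identity through analytic continuation in $s$ (the finite product $h$ extending the series off its half-plane of convergence) and control the boundary behaviour, for instance by Abel summation, to legitimately write $\log h(k)$ as the displayed series. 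I expect this convergence/continuation step, rather than the algebraic bookkeeping, to be where the care is needed.
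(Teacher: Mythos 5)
Your proposal follows essentially the same route as the paper's own proof: both split the cohomological product of Corollary~\ref{cor:3.2.1} into the $r^{k}$-eigenspace part $(1-r^{k-s})^{\nu}$ and its complement, extract $(\log r)^{\nu}$ from the simple vanishing of $1-r^{k-s}$ at $s=k$, express the non-vanishing remainder as the exponential of the trace series, and recombine via the decomposition $\Lambda(\varphi^{m})=-\nu\,r^{km}+\sum_{i}(-1)^{i}\mathrm{tr}(\varphi^{m*}|H^{i}(S)^{\varphi^{*}\nsim r^{k}})$. Your closing caveat is apt: the paper evaluates the exponential series at $s=k$ without addressing convergence when some eigenvalue $\alpha\neq r^{k}$ has $|\alpha|\geq r^{k}$, so your explicit flag that the identity must then be read through analytic continuation of the finite product $h(s)$ makes your write-up, if anything, more careful than the original on that one point.
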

\begin{proof}
Corollary \ref{cor:3.2.1} allows $\zeta(M;s)$ to be decomposed with respect to eigenvalues of $\varphi^{*}$:
\begin{equation*}
    \zeta(M;s)=\prod_{i}(1-r^{k-s})^{(-1)^{i+1}\dim (H^{i}(S))^{\varphi^{*}\sim r^{k}}}\det(1-\varphi^{*}\cdot{r}^{-s}|H^{i}(S)^{\varphi^{*}\nsim r^{k}})^{(-1)^{i+1}}
\end{equation*}
We give the Taylor series about $(s-k)$
\begin{equation*}
    \begin{split}
    \zeta(M;s)=(\log{r}(s-k)+&O((s-k)^{2}))^{\mathrm{ord}_{s=k}\zeta(M;s)}\\
    &\cdot\exp(\sum_{m\ge1}\sum_{i}(-1)^{i}\mathrm{tr}(\varphi^{m*})\cdot \frac{r^{-sm}}{m}|H^{i}(S)^{\varphi^{*}\nsim r^{k}}).
    \end{split}
\end{equation*}
Note that $O(x^{2})$ is the big-O notation. Then we compute the special value at $s=k$
\begin{equation*}
    \zeta(M;k)^{*}=(\log{r})^{\mathrm{ord}_{s=k}\zeta(M;s)}\cdot\exp(\sum_{m\ge1}\sum_{i}(-1)^{i}\mathrm{tr}(\varphi^{m*})\cdot \frac{r^{-km}}{m}|H^{i}(S)^{\varphi^{*}\nsim r^{k}}).
\end{equation*}
By decomposing the Lefschetz number with respect to $\mathrm{ord}_{s=k}\zeta(M;s)$
\begin{equation*}
        \Lambda(\varphi^{m})=-r^{km}\cdot\mathrm{ord}_{s=k}\zeta(M;s)+\sum_{i}(-1)^{i}\mathrm{tr}(\varphi^{m*}|H^{i}(S)^{\varphi^{*}\nsim r^{k}}),
\end{equation*}
we get the statement of the theorem
\begin{equation*}
    \zeta(M;k)^{*}=(\log{r})^{\mathrm{ord}_{s=k}\zeta(M;s)}\exp(\sum_{m\ge1}\frac{r^{-km}\Lambda(\varphi^{m})+\mathrm{ord}_{s=k}\zeta(M;s)}{m}).
\end{equation*}
\end{proof}

    

\end{document}